%
%
%
%


\documentclass[reqno,11pt]{amsart}

\usepackage{amsfonts, amssymb, verbatim}
\textwidth 16.0cm \textheight 22.5cm \oddsidemargin .1cm
\evensidemargin 0in \topmargin -1.0cm
\usepackage{amssymb,epsfig,amscd, verbatim}
\usepackage{amsthm}
\usepackage{amsmath}
\usepackage{accents}
\usepackage{graphicx}
\usepackage{latexsym}

\usepackage[all]{xy}

\newtheorem{theorem}{Theorem}

\newtheorem{proposition}[theorem]{Proposition}

\theoremstyle{definition}
\newtheorem{definition}[theorem]{Definition}

\numberwithin{equation}{section} \numberwithin{theorem}{section}

\theoremstyle{remark}

\def\z{\,_{\dot z}\,}

\def\vac{|0\rangle}                            


\newcommand\lbb[1]{\label{#1}}

\def\as{associative}

\def\tt{\otimes}                               
\def\<{\langle}
\def\>{\rangle}

\def\d{\partial}

\def\p{\partial}

\def\cp{\mathbb{C}[\partial]}
\def\la{\lambda}


\def\vac{\mathbf{1}}                            




\newcommand{\FF}{\mathbb{F}}
\newcommand{\kk}{\mathbb{C}}

\def\be{\beta}

\def\De{\Delta}

\def\la{\lambda}







\def\z{\,_{\dot z}\,}

\def\oH{{\otimes}_H}

\def\oM{\overset    { \,_{\tiny{M}}\,  }{*}}

\def\o1{\overset    { \,_{\tiny{(1)}\, }}{*}}
\def\ooo{\overset    { \,_{\tiny{(2)}\, }}{*}}

\def\oN{\overset    {  N}{*}}
\def\oE{\overset    {  E}{*}}

\def\noi{\noindent}




\def\v2a{(V,\z,\vac,d)}
\begin{document}

\title{Cohomology of associative H-pseudoalgebras}

\author[Jos\'e I. Liberati]{Jos\'e I. Liberati$^*$}
\thanks {\textit{$^{*}$Ciem - CONICET, Medina Allende y
Haya de la Torre, Ciudad Universitaria, (5000) C\'ordoba -
Argentina. \hfill \break \indent e-mail: joseliberati@gmail.com
\hfill \break \indent Keywords: associative pseudoalgebra, associative conformal algebra, cohomology.
\hfill \break \indent ORCiD: 0000-0002-5422-4056
\hfill \break \indent Final version accepted for publication.}}
\address{{\textit{Ciem - CONICET, Medina Allende y
Haya de la Torre, Ciudad Universitaria, (5000) C\'ordoba -
Argentina. \hfill \break \indent e-mail: joseliberati@gmail.com}}}


\subjclass[2010]{Primary 17B69; Secondary 17B67}

\maketitle

\begin{abstract}
We define cohomology of associative $H$-pseudoalgebras, and we show that it describes module extensions, abelian pseudoalgebra extensions, and pseudoalgebra first order deformations. We describe in details the same results for the special case of associative conformal algebras.
\end{abstract}


\section{Introduction}\lbb{intro}

Since the pioneering papers \cite{BPZ} and \cite{Bo}, there has been a great
deal of work towards understanding of the algebraic structure underlying the
notion of the operator product expansion (OPE) of chiral fields of a
conformal field theory.  The singular part of the OPE encodes the commutation
relations of fields, which leads to the notion of a Lie conformal algebra introduced by V. Kac
\cite{K}.
In the past few years a structure theory \cite{DK}, representation theory
\cite{CK, CKW} and cohomology theory \cite{BKV} of finite Lie conformal algebras
has been developed.

In \cite{BDK}, Bakalov, D’Andrea and Kac develop a theory of “multi-dimensional” Lie conformal algebras,
called Lie $H$-pseudoalgebras, where $H$ is a Hopf algebra. They also solve classification problems and develop the cohomology theory. In \cite{BDK2,BDK3}, they continue with the representation theory, classifying the  irreducible modules over finite simple Lie $H$-pseudoalgebras.

In the present work, we study {\it associative}  $H$-pseudoalgebras  and the particular case of associative conformal algebras, that is, when $H=\cp$. The associative $H$-pseudoalgebras has not been studied to the extent it needs. Important results for associative conformal algebras has been obtained by P. Kolesnikov (see \cite{Ko}), where an  analog of the Wedderburn
theorem for associative conformal algebras was proved. In \cite{D1}, I. A. Dolguntseva   define the cohomology groups of associative $H$-pseudoalgebras, and prove an analog of Hochschild's theorem for such algebras, establishing a relationship between extensions of the algebras and the second cohomology group. The explicit computations of the second cohomology group for the main examples of associative conformal algebras,  $Cend_n$ and $Cur_n$ are present in \cite{D2}. In \cite{Ko2}, the classification of irreducible subalgebras of the associative conformal algebra $Cend_n$ is presented.
 In \cite{KK,Koz}, they  describe all semisimple algebras of conformal
endomorphisms which have the trivial second Hochschild cohomology group with coefficients in every conformal bimodule. As a consequence, they state a complete solution of the radical splitting problem in the class of associative conformal algebras with a finite faithful representation.
  In \cite{BKL}, we describe the finite irreducible modules over $Cend_{n,p}$ (a family of infinite subalgebras of $Cend_n$). We also classify certain extensions of irreducible modules over $Cend_{n,p}$. We also obtained all the automorphism of $Cend_{n,p}$.

As we pointed out, the cohomology of associative $H$-pseudoalgebras was defined in \cite{D1}, but they use it only to describe the extensions of algebras using the second cohomology group.
In the present  work, we develop in full details   the zero, first and second cohomologies of associative $H$-pseudoalgebras.

The zero cohomology deserve special attention. The zero differential map $d_0$ is not explicitly written in any paper, and the general formula for the differentials maps given in \cite{D1} does not apply. So, this is the first time where the zero cohomology group is described. The image of $d_0$ is what we call the set of inner derivations, and we prove that they are  derivations, that is, we present a proof that the composition of differentials $d_1\circ d_0$ is zero. This is one of the new results of this work.

For an associative $H$-pseudoalgebra  $A$, and for any pair of left $A$-modules $M$ and $N$, we provide a new structure of $A$-bimodule on Chom$(M,N)$, where Chom$(M,N)$ is the conformal analog of the Hom functor for associative algebras (see \cite{BDK}). Then, one of our main results is Theorem \ref{th1}, where we obtained that the extensions of modules, of $M$ by $N$, is in one-to-one correspondence with elements of the first cohomology group of $A$ with coefficient in Chom$(M,N)$.

Finally, we present another main result, given by the classification of first order deformations of an associative $H$-pseudoalgebra in terms of the second cohomology group, see Theorem \ref{th3}.

At the end of this work we apply these results to the particular example of associative conformal algebras. In this case, the $n$-cochains are defined using only $n-1$ variables, instead of the  $n$-variables used in the Lie conformal algebra case in \cite{BKV}. Our situation is similar to the corrected version presented in \cite{DeK}.

In section 2, we   present the basic definitions and notations. In section 3, we define  the Hochschild cohomology for an associative $H$-pseudoalgebra $A$ over an $A$-bimodule. Then, we study  in more details the zero, first and second cohomologies.
 In section 4, we describe the extensions of modules over an associative $H$-pseudoalgebra.
In  section 5, we describe the abelian extensions and the first order deformations in terms of the corresponding second cohomology group.
In section 6, we apply these results to the particular example of associative conformal algebras.

Unless otherwise specified, all vector spaces, linear maps and
tensor products are considered over a field $\mathbb{F}$ of characteristic 0.

\section{Definitions and notation}\lbb{def }

\

Let $H$ be a Hopf algebra with comultiplication $\De$ and  counit $\varepsilon$. A more conceptual approach to the theory of associative conformal algebras, their identities, modules, cohomology, etc., is provided by the notion of an $H$-pseudoalgebra introduced in \cite{BDK}. Indeed, in ordinary algebra, all basic definitions may be stated in terms of linear spaces, polylinear maps, and their compositions. For $H$-pseudoalgebras, the base field is replaced with the Hopf algebra $H$, the class of linear spaces is replaced with the class  $\mathcal{M}(H)$ of left $H$-modules and the role of $n$-linear maps is played by $H^{\tt n}$-linear maps of the form
\begin{equation*}
  \varphi:V_1\tt\dots \tt V_n\longrightarrow H^{\tt n}\oH V, \qquad V_i,V\in \mathcal{M}(H),
\end{equation*}
where $H^{\tt n}=H\tt\dots \tt H$ and we define the right action of $H$ on $H^{\tt n}$ by setting
\begin{equation*}
  (h_1\tt \dots \tt h_n)\cdot h=(h_1\tt \dots \tt h_n)\, \De^{(n-1)}(h),
\end{equation*}
where
\begin{equation*}
  \De^{(n-1)}:=(\De \tt {\rm id} \tt \dots \tt {\rm id})\dots (\De \tt {\rm id})\De: H\longrightarrow H^{\tt n}
\end{equation*}

\vskip .2cm

\noi is the iterated comultiplication for $n> 1$, and $\De^{(0)}:={\rm id}$. The map $\varphi$ is called $H^{\tt n}$-linear if
\begin{equation*}
  \varphi(h_1 a_1\tt \dots \tt h_n\, a_n)=\big((h_1\tt \dots \tt h_n)\oH 1\big) \, \varphi(a_1\tt\dots \tt a_n)
\end{equation*}
for $h_i\in H$ and $a_i\in V_i$.

Let $V_1,V_2$ and $V_3$ be left $H$-modules on which some $H^{\tt 2}$-linear operation $*:V_1\tt V_2\longrightarrow H^{\tt 2}\oH V_3$ is defined. Note that $*$ naturally extends to
\begin{equation*}
  *:\big(H^{\tt n} \oH V_1\big)\tt \big(H^{\tt m} \oH  V_2\big) \longrightarrow H^{\tt {(n+m)}}\oH V_3
\end{equation*}
by taking
\begin{align}\label{1}
 & \big((h_1\tt \dots \tt h_n)\oH\,  v_1\big)*\big((g_1\tt \dots \tt g_m)\oH \, v_2\big)
  =  \\
   & =\big((h_1\tt \dots \tt h_n \tt g_1\tt \dots \tt g_m)\oH \, 1\big)\big(   (\De^{(n-1)}\tt \De^{(m-1)}   )\oH \, {\rm id}\big)(v_1* v_2).\nonumber
\end{align}

\vskip .2cm

\noi This formula reflect the composition rule of polylinear maps in $\mathcal{M}(H)$ (see \cite{BDK} for details).

\

An $H$-{\it pseudoalgebra} is a left $H$-module $A$ together with an $H^{\tt 2}$-linear map
\begin{align*}
   & *:A\tt A\longrightarrow H^{\tt 2}\oH A \\
   & \ \  \ \ \  a\tt b\ \longmapsto \ a*b
\end{align*}
called the {\it pseudoproduct} (similar to the definition of an ordinary algebra as a linear space equipped with a bilinear product map).

In order to define associativity of a pseudoproduct, we  extend it from $A\tt A\longrightarrow H^{\tt 2}\oH A$ to $(H^{\tt 2}\oH A)\tt A\longrightarrow H^{\tt 3}\oH A$, and to $A\tt (H^{\tt 2}\oH A)\longrightarrow H^{\tt 3}\oH A$, by using the composition rules in (\ref{1}) with $A=V_1=V_2=V_3$:

\begin{align*}
  (f\oH\, a)*b & = \sum_i \, (f\tt 1)\, (\De\tt {\rm id})\, (g_i)\,\oH \, c_i \\
  a*(f\oH\, b) & = \sum_i \, (1\tt f)\, ({\rm id}\tt \De)\, (g_i)\,\oH \, c_i
\end{align*}
where $a*b=\sum_i\, g_i\, \oH \, c_i$.

An $H$-pseudoalgebra is called {\it associative} if it satisfies the usual equality (in $H^{\tt 3}\oH A$):

\begin{equation}\label{2}
  (a*b)*c=a*(b*c).
\end{equation}

\vskip .2cm

\noi In more details, each term of (\ref{2}) is explicitly given by the following formulas: if
\begin{equation*}
  a*b=\sum_i\ (f_i\tt g_i)\, \oH \, e_i, \quad {\rm and}\quad e_i*c=\sum_j\ (f_{ij}\tt g_{ij})\,\oH \, e_{ij},
\end{equation*}
then
\begin{equation*}
  (a*b)*c=\sum_{i,j}\, \Big(f_i f_{ij(1)}\tt\,  g_i  f_{ij(2)}\tt \, g_{ij}\big)\, \oH \,e_{ij} \, \in H^{\tt 3}\oH\, A.
\end{equation*}

\noi Similarly, if we write

\begin{equation*}
  b*c=\sum_i \ (h_i\tt l_i)\, \oH \, d_i, \quad {\rm and}\quad a*d_i=\sum_j\ (h_{ij}\tt l_{ij})\,\oH \, d_{ij},
\end{equation*}
then
\begin{equation*}
  a*(b*c)=\sum_{i,j}\, \Big(h_{ij}\tt\,  h_i \ l_{ij(1)}\tt \, l_i \ l_{ij(2)}\big)\, \oH \,d_{ij} \, \in H^{\tt 3}\oH\, A.
\end{equation*}

\

\begin{definition}
  Let $A$ be an associative $H$-pseudoalgebra.

\vskip .1cm

\noi (a) A {\it left $A$-module} is a left $H$-module $M$ together  with an $H^{\tt 2}$-linear map   $\oM:A\tt M\longrightarrow H^{\tt 2}\oH \, M$ such that

\begin{equation*}
  (a*b)\oM u=a\oM(b\oM u)
\end{equation*}
for all $a,b\in A$, and $u\in M$.

\vskip .1cm

\noi (b) A {\it right $A$-module} is a left $H$-module $M$ together  with an $H^{\tt 2}$-linear map   $\oM:M\tt A\longrightarrow H^{\tt 2}\oH\,  M$ such that

\begin{equation*}
  (u\oM a)\oM b=u \oM(a* b)
\end{equation*}
for all $a,b\in A$, and $u\in M$. In general, we shall simply write $*$ instead of $\oM$.

\vskip .1cm

\noi (c) A {\it bimodule} over $A$ is a left and right $A$-module $M$ satisfying

\begin{equation*}
  (a*u)* b=a *(u* b).
\end{equation*}
\end{definition}

\vskip .2cm

If $H=\mathbb{C}$, then all these definitions correspond to the usual \as \ algebras and their modules.

\

\section{Hochschild cohomology for  associative $H$-pseudoalgebras}\lbb{def }

\

Let us describe the Hochschild cohomology for an associative $H$-pseudoalgebra $A$ and a bimodule $M$ over $A$ (see \cite{D1}). The space of $n$-cochains $C^{\, n}(A,M)$ consists of all $H^{\tt n}$-linear maps
\begin{equation}\label{hhh}
  \varphi \, :\, A^{\tt n}\longrightarrow H^{\tt n}\, \oH\, M.
\end{equation}

\vskip .2cm

\noi The differential $d_n: C^{\, n}(A,M)\longrightarrow C^{\, n+1}(A,M)$ is defined similarly to the ordinary one, assuming the compositions of polylinear maps in $\mathcal{M}(H)$:

\begin{align}\label{3}
  \big(d_n\, \varphi\big)(a_1,\dots , a_{n+1})
  & =  a_1 * \varphi(a_2,\dots , a_{n+1})  \\
  & +\sum_{i=1}^{n}\, (-1)^i\, \varphi (a_1,\dots , a_i*a_{i+1} , \dots a_{n+1}) \ +\ (-1)^{n+1} \, \varphi(a_1,\dots , a_{n}) \, *\, a_{n+1}. \nonumber
\end{align}

\vskip .2cm

\noi In the first and the last summand in (\ref{3}), we use the following conventions that  correspond to the composition defined in (\ref{1}). If $a*u=\sum_i\, f_i\oH\, u_i\in H^{\tt\,2}\oH M$, for $a\in A, u\in M$, then for any $f\in \,H^{\tt n}$, we set

\begin{equation}\label{4}
  a*\big(f\, \oH \, u\big)= \sum_i\, (1\tt f) \big({\rm id}\tt \De^{(n-1)}\big) (f_i)\, \oH\, u_i \in H^{\tt\, (n+1)} \oH\, M.
\end{equation}

\noi Similarly, if $u*a=\sum_i\, g_i \, \oH\, u_i\in H^{\tt\,2}\oH M$, for $a\in A, u\in M$, then for any $g\in H^{\tt n}$, we set

\begin{equation}\label{5}
  \big(g\, \oH \, u\big)*a= \sum_i\, (g\tt 1) \big(\De^{(n-1)}\tt {\rm id} \big) (g_i)\, \oH\, u_i \in H^{\tt\, (n+1)} \oH\, M.
\end{equation}

\noi Finally, it remains to describe the composition used in the second summand in (\ref{3}). For $g\in H^{\tt\,2}$ and $\varphi\in C^{\, n}(A,M)$, we set

\begin{align}\label{7}
 & \varphi(b_1,\dots ,b_{\, i-1}, g\, \oH \, b_i, b_{\, i+1}, \dots ,b_n)=\\
 & =\Big[\, \big( 1^{\tt (i-1)}\tt\, g\, \tt\, 1^{\tt (n-i)}\big)\,
  \big({\rm id}^{\tt (i-1)}\tt\, \De\, \tt\, {\rm id}^{\tt (n-i)}\big)\, \oH\, {\rm id}_M \Big]\, \varphi(b_1,\dots ,b_n)
  \in  H^{\tt\,(n+1)}\oH M.\nonumber
\end{align}

\vskip .2cm

\noi  Direct computations show that $d_{n+1}\circ \, d_n =0$.
If $d_n\,\varphi=0$, then $\varphi$ is called an $n$-$cocycle$. A cochain $\varphi\in C^{\, n}(A,M)$ is called an $n$-$coboundary$ if there exists an $(n-1)$-cochain $\psi$ such that $d_n\,\psi=\varphi$. Denote by $Z^{\, n}(A,M)$ and $B^{\, n}(A,M)$ the subspaces of $n$-cocycles and $n$-coboundaries, respectively. The quotient space $H^{\, n}(A,M)=Z^{\, n}(A,M)/B^{\, n}(A,M)$ is called the $n$-th Hochschild cohomology group of $A$ with coefficients in $M$.

\vskip .2cm

Let us see in more details the zero, first and second cohomologies. The case $n=0$ deserve special attention. It is not explicitly written in any work.  We shall assume that $A^{\tt \, 0}=\mathbb{F}=H^{\tt\, 0}$. Then, the $0$-cochain $\varphi\in C^{\, 0}(A,M)$ is a map
\begin{equation*}
  \varphi\, :\, \mathbb{F}\longrightarrow \mathbb{F}\, \oH\, M.
\end{equation*}

\vskip .2cm

\noi Hence, $\varphi$ is fully determined by $\varphi(1)\in \mathbb{F}\,\oH\, M\simeq M/H^+ M$, where $H^+=\{h\in H\, |\, \varepsilon(h)=0\}$ is the augmentation ideal, and $\FF\cdot h:=\FF\, \varepsilon(h)$. Therefore,

\begin{equation*}
  C^{\, 0}(A,M)\simeq M/H^+ M.
\end{equation*}
\vskip .2cm

\noi Observe that $C^{\,1}(A,M)=\ $Hom$_H(A,M)$ and the differential  $d_0\, :\,  C^{\,0}(A,M)\longrightarrow C^{\, 1}(A,M)$ is defined by the following formula: if $\varphi\in C^{\, 0}(A,M)$ and $u_\varphi :=\varphi(1)\in M$, then

\begin{equation*}
  \big(d_0 \, \varphi\big)(a)=\sum_i\, ({\rm id}\tt\varepsilon)(h_i) \, u_i \, -\, \sum_j  (\varepsilon\tt {\rm id}) (l_j)\, v_j\in M,
\end{equation*}
where $a*u_\varphi=\sum_i\, h_i\oH \, u_i\in H^{\tt\,2}\oH M$ and
 $u_\varphi*a=\sum_j\, l_j\oH \, v_j\in H^{\tt\,2}\oH M$, for $a\in A$, or in a simpler form, we have
 \begin{equation}\label{6}
   \big(d_0 \, \varphi\big)(a)=\big[ ({\rm id}\tt\varepsilon)\, \oH\, {\rm id}_M\big] \, (a \oM u_\varphi ) -
   \big[ (\varepsilon\tt{\rm id})\, \oH\, {\rm id}_M\big] \, (u_\varphi  \oM a).
 \end{equation}

\vskip .2cm

\noi It is clear that $d_0$ is well defined:  If $\varphi(1)=1\oH  h u$, with $\varepsilon(h)=0$, then  we simple have to  use $a\oM hu=((1\tt h)\oH 1)(a\oM u)$ and $hu\oM a=((h\tt 1)\oH 1)(u\oM a)$ in (\ref{6}) to get the result. Similarly, it is easy to see that $d_0\, \varphi\in C^1(A,M)$.

\vskip .2cm

Therefore, we obtain

\begin{equation*}
 H^{0}(A,M)=\Big\{ u\in M/H^+ M\, | \,  \Big[ ({\rm id}\tt\varepsilon)\, \oH\, {\rm id}_M\Big] \, (a \oM u ) =
   \Big[ (\varepsilon\tt{\rm id})\, \oH\, {\rm id}_M\Big] \, (u  \oM a),\ \mathrm{for \ all} \ a\in A\Big\}.
\end{equation*}

\vskip .2cm

Now, recall that $C^{\,1}(A,M)=\ $Hom$_H(A,M)$, since we identified $H\oH M\simeq M$. Observe that

\begin{equation*}
  C^{\,2}(A,M)=\Big\{ \varphi : A\tt A\rightarrow H^{\tt 2}\oH M \, | \, \varphi(h a, g b)=\big((h\tt g)\oH 1\big) \varphi (a,b), \forall\ a,b\in A, \forall\ h,g\in H\Big\}
\end{equation*}

\vskip .2cm

\noi and the differential is given by
$
  \big(d_1  \varphi\big)(a,b)=a*\varphi(b)-\varphi(a*b)+\varphi(a)*b.
$
Using the conventions (\ref{4}) and (\ref{5}), it is clear that

\begin{equation*}
  \big(d_1  \varphi\big)(a,b)=a\oM\varphi(b)-\varphi(a*b)+\varphi(a)\oM b,
\end{equation*}

\vskip .2cm

\noi and it remains to prove that the composition (\ref{7}) means that $\varphi(a*b)=
({\rm  id}_{H^{\tt 2}}\oH \varphi)(a*b)$, that is, we have to consider  the trivial extension of $\varphi$ to a map from ${H^{\tt 2}}\oH A$ to $M$. In fact, if $a*b=\sum_j \, g_j\oH c_j$ with $g_j\in H^{\tt\, 2}$ and $c_j\in A$, then using (\ref{7}), we have
\begin{equation}\label{nuevo}
  \varphi (a*b)=\sum_j \, \varphi (g_j\oH c_j)=\sum_j\, (g_j \De \oH {\rm id}_M) \, \varphi(c_j)=\sum_j\, g_j\oH \varphi(c_j)=
  ({\rm  id}_{H^{\tt 2}}\oH \varphi)(a*b),
\end{equation}
and in the middle of (\ref{nuevo}) we have used that $\varphi(c_j)\in M$ since we identified $H\oH M$ with  $M$.

A map $f\in \,$Hom$_H(A,M)$ is called a {\it derivation} from $A$ to $M$ if
\begin{equation*}
  f(a*b)=a\oM f(b)+f(a)\oM b,
\end{equation*}

\vskip .1cm

\noi for all $a,b\in A$ and $f$ extended trivially to a map from ${H^{\tt 2}}\oH A$ to $M$. We denote by Der$(A,M)$ the set of all derivations from $A$ to $M$. Then Ker$\, d_1=\,$Der$(A,M)$.

\vskip .2cm

\begin{proposition}\label{p1}
  For $u\in M$, we define $f_u:A\rightarrow M$ by
  \begin{equation*}
    f_u(a)=\big[ ({\rm id}\tt\varepsilon)\, \oH\, {\rm id}_M\big] \, (a * u ) -
   \big[ (\varepsilon\tt{\rm id})\, \oH\, {\rm id}_M\big] \, (u  * a).
  \end{equation*}
  Then $f_u$ is $H$-linear and it is a derivation. Hence, we have that $d_1\circ \, d_{\,0} =0$.
\end{proposition}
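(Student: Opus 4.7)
The strategy is to split $f_u = L_u - R_u$, where
\[
L_u(a) := \big[(\mathrm{id}\tt\varepsilon)\oH\mathrm{id}_M\big](a*u), \qquad
R_u(a) := \big[(\varepsilon\tt\mathrm{id})\oH\mathrm{id}_M\big](u*a),
\]
and then verify two things: (A) each of $L_u$ and $R_u$ is $H$-linear (so $f_u$ is $H$-linear); and (B) the three identities
\[
L_u(a*b) = a*L_u(b), \qquad R_u(a*b) = R_u(a)*b, \qquad L_u(a)*b = a*R_u(b)
\]
hold in $H^{\tt 2}\oH M$ for all $a,b\in A$. Once (B) is established, the derivation identity for $f_u$ will follow at once by subtraction, the cross term $a*R_u(b) - L_u(a)*b$ vanishing.

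For (A), the key point is the $H^{\tt 2}$-linearity of the two bimodule actions. Writing $(ha)*u = ((h\tt 1)\oH 1)(a*u)$ and applying $(\mathrm{id}\tt\varepsilon)\oH\mathrm{id}_M$, the factor $h$ passes out as a scalar on the first tensor component and $\varepsilon$ is unaffected, yielding $L_u(ha) = h\, L_u(a)$. The symmetric calculation with $u*(ha) = ((1\tt h)\oH 1)(u*a)$ and $(\varepsilon\tt\mathrm{id})\oH\mathrm{id}_M$ gives $R_u(ha) = h\, R_u(a)$.

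For (B), each of the three identities comes from applying a different partial counit $H^{\tt 3}\to H^{\tt 2}$ (tensored over $H$ with $\mathrm{id}_M$) to one of the three bimodule associativity relations, all living in $H^{\tt 3}\oH M$. Applying $(\mathrm{id}\tt\mathrm{id}\tt\varepsilon)\oH\mathrm{id}_M$ to $(a*b)*u = a*(b*u)$, and using the explicit formulas for both sides given at the beginning of Section~2 (with the pseudoproduct replaced by the bimodule action in the last variable) together with the counit axiom $x_{(1)}\varepsilon(x_{(2)}) = x$, yields $L_u(a*b) = a*L_u(b)$. The symmetric map $(\varepsilon\tt\mathrm{id}\tt\mathrm{id})\oH\mathrm{id}_M$ applied to $(u*a)*b = u*(a*b)$ produces $R_u(a*b) = R_u(a)*b$. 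The crucial third identity, $L_u(a)*b = a*R_u(b)$, comes from the middle partial counit $(\mathrm{id}\tt\varepsilon\tt\mathrm{id})\oH\mathrm{id}_M$ applied to $(a*u)*b = a*(u*b)$.

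Combining these identities:
\begin{align*}
f_u(a*b) &= L_u(a*b) - R_u(a*b) = a*L_u(b) - R_u(a)*b,\\
a*f_u(b) + f_u(a)*b &= a*L_u(b) - a*R_u(b) + L_u(a)*b - R_u(a)*b,
\end{align*}
and the third identity in (B) cancels the middle two terms of the second line, so both expressions coincide. Hence $f_u$ is a derivation. Since $d_0\varphi = f_{\varphi(1)}$ for every $\varphi\in C^0(A,M)$ and $\mathrm{Ker}\,d_1 = \mathrm{Der}(A,M)$, we conclude $d_1\circ d_0 = 0$. The main obstacle is the bookkeeping in (B): the explicit forms of $(a*b)*u$ and $a*(b*u)$ involve iterated comultiplications on various tensor slots, and one must identify, for each identity, the partial counit that collapses both sides consistently; once that choice is made, the verification reduces to a mechanical application of the counit axioms.
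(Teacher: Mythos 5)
Your proposal is correct and follows essentially the same route as the paper: the paper likewise splits $f_u$ into the two terms $L_u$ and $R_u$, checks $H$-linearity via the $H^{\tt 2}$-linearity of the actions, and expresses $a*L_u(b)$, $L_u(a*b)$, $a*R_u(b)$, $L_u(a)*b$, $R_u(a)*b$, $R_u(a*b)$ as the partial counits $({\rm id}\tt{\rm id}\tt\varepsilon)$, $({\rm id}\tt\varepsilon\tt{\rm id})$, $(\varepsilon\tt{\rm id}\tt{\rm id})$ applied to the three bimodule associativity relations, exactly as in your step (B). The only difference is presentational: the paper verifies one of the three identities by explicit Sweedler-notation computation and asserts the other two "similarly," which is the same level of detail as your plan.
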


\begin{proof}
   First, we prove that it is $H$-linear:
   \begin{align*}
     f_u(ha) = & \big[ ({\rm id}\tt\varepsilon)\, \oH\, {\rm id}_M\big] \, \big((h\tt 1)\oH 1\big) (a * u ) -
   \big[ (\varepsilon\tt{\rm id})\, \oH\, {\rm id}_M\big] \, \big((1\tt h)\oH 1\big)(u  * a).  \\
     = &  \big( h\oH {\rm id}_M\big) \Big(\big[ ({\rm id}\tt\varepsilon)\, \oH\, {\rm id}_M\big] \, (a * u ) -
   \big[ (\varepsilon\tt{\rm id})\, \oH\, {\rm id}_M\big] \, (u  * a)\Big) = h\, f_u(a).
   \end{align*}

\noi In order to prove that it is a derivation, observe that
\begin{align}\label{t1}
  a*f_u(b) & = a*\Big(\big[ ({\rm id}\tt\varepsilon)\, \oH\, {\rm id}_M\big] \, (b * u )\Big) -
   a*\Big( \big[ (\varepsilon\tt{\rm id})\, \oH\, {\rm id}_M\big] \, (u  * b)\Big)\\
  f_u(a)*b & = \Big( \big[ ({\rm id}\tt\varepsilon)\, \oH\, {\rm id}_M\big] \, (a * u )\Big)*b -
   \Big(\big[ (\varepsilon\tt{\rm id})\, \oH\, {\rm id}_M\big] \, (u  * a)\Big)*b\label{t2}\\
  f_u(a*b) & =  \sum_i \, (f_i\tt g_i)\oH
  \Big(\big[ ({\rm id}\tt\varepsilon)\, \oH\, {\rm id}_M\big] \, (e_i * u ) -
   \big[ (\varepsilon\tt{\rm id})\, \oH\, {\rm id}_M\big] \, (u  * e_i)\Big),
  \label{t3}
\end{align}

\noi where $a*b=\sum_i\, (f_i\tt g_i)\oH \, e_i$,  and in (\ref{t3}) we used (\ref{nuevo}).
Now, let us see that the first term of (\ref{t1}) is equal to the first term of (\ref{t3}). If $b*u=\sum_i\, (h_i\tt l_i)\,\oH \,u_i$, then
$$
\big[ ({\rm id}\tt\varepsilon)\, \oH\, {\rm id}_M\big] \, (b * u ) = \sum_i\, h_{i}\, \varepsilon(l_{i})\, u_{i}\in M.
$$
Hence, using that $a*u_i=\sum_j\, (h_{ij}\tt l_{ij})\, \oH \, u_{ij}$, we obtain that  the first term of (\ref{t1}) is equal to

\begin{align}\label{11}
  \sum_i\, a*(h_{i}\, \varepsilon(l_{i})\, u_{i})
   & =\sum_{i,j}\,  \Big(h_{ij}\tt h_i\,\varepsilon(l_i) \, l_{ij}\Big)\oH \, u_{ij}=\sum_{i,j}\, \Big(h_{ij}\tt h_i\, l_{ij(1)}\, \varepsilon\big(l_{ij(2)}\big)\, \varepsilon(l_i)\Big)\oH\, u_{ij} \nonumber \\
   & = \sum_{i,j}\, ({\rm id}\tt {\rm id}\tt \varepsilon)\Big(h_{ij}\tt h_i\, l_{ij(1)}\, \tt\, l_{ij(2)}\, l_i\Big)\oH\, u_{ij}  \nonumber\\
   & = \big[({\rm id}\tt {\rm id}\tt \varepsilon)\oH\, {\rm id}_M\big]\big(a*(b*u)\big).
\end{align}

\vskip .4cm

\noi Now, if $e_i* u=\sum_j \, (f_{ij}\tt g_{ij})\oH \, v_{ij}$, then $\big[ ({\rm id}\tt\varepsilon)\, \oH\, {\rm id}_M\big] \, (e_i * u ) = \sum_j\, f_{ij}\, \varepsilon(g_{ij})\, v_{ij}\in M$. Hence, we obtain that  the first term of (\ref{t3}) is equal to

\begin{align*}
 \sum_{i,j} \, (f_i\tt g_i)  \, \De(f_{ij}\, \varepsilon(g_{ij})) \oH\, v_{ij}
   & = \sum_{i,j} \, \Big(f_i f_{ij(1)} \! \tt\, g_i f_{ij(2)} \, \varepsilon(g_{ij}) \Big) \oH\, v_{ij} \nonumber \\
   & = \big[({\rm id}\tt {\rm id}\tt \varepsilon)\oH\, {\rm id}_M\big]\big((a*b)*u\big),
\end{align*}
which is equal to (\ref{11}), proving that the first term of (\ref{t1}) is equal to the first term of (\ref{t3}).

\vskip .1cm

Similarly, with the same ideas, one can prove that the second term of (\ref{t1}) is equal to the first term of (\ref{t2}), and the second term of (\ref{t2}) is equal to the second term of (\ref{t3}). More precisely, it is possible to prove that

\begin{align*}
  a*f_u(b) & = \big[ ({\rm id}\tt{\rm id}\tt\varepsilon)\, \oH\, {\rm id}_M\big] \, (a*(b * u )) -
    \big[ ({\rm id}\tt\varepsilon\tt{\rm id})\, \oH\, {\rm id}_M\big] \, (a* (u  * b))\\
  f_u(a)*b & =  \big[ ({\rm id}\tt\varepsilon\tt {\rm id})\, \oH\, {\rm id}_M\big] \, ((a * u )*b) -
   \big[ (\varepsilon\tt{\rm id}\tt {\rm id})\, \oH\, {\rm id}_M\big] \, ((u  * a)*b) \\
  f_u(a*b) & =  \big[ ({\rm id}\tt{\rm id}\tt\varepsilon)\, \oH\, {\rm id}_M\big] \, ((a*b )* u ) -
    \big[ (\varepsilon\tt{\rm id}\tt{\rm id})\, \oH\, {\rm id}_M\big] \, ( u *(a * b)),
\end{align*}

\vskip .2cm

\noi obtaining that $f_u$ is a derivation.
\end{proof}

The derivations in Proposition \ref{p1} are called {\it inner derivations}, and we denote by IDer$(A,M)$ the corresponding set. Therefore, we obtain

\begin{equation*}
  H^{\, 1}(A,M)=\, {\rm Der}(A,M)/ \,  {\rm IDer}(A,M).
\end{equation*}

\vskip .2cm

\noi If $\varphi\in C^{\,2}(A,M)$, the definition of $d_2$ is clear:
\begin{equation*}
  \big( d_2\, \varphi\big) (a,b,c)=a*\varphi(b,c)-\varphi(a*b,c)+\varphi(a, b*c)-\varphi(a,b)*c.
\end{equation*}
as well as the cohomology group $H^{\, 2}(A,M)$.

\

\section{$H$-pseudolinear maps and extensions of modules over \as \ $H$-pseudoalgebra}\lbb{fff}

\

In this section, we introduce the $H$-pseudoalgebra analog of the "Hom" functor, defined in Section 10 in \cite{BDK}, and then we describe the extensions of modules over \as \ $H$-pseudoalgebra.
 The contents of this section are completely new.
\

\begin{definition}
  Let $M$ and $N$ be two left $H$-modules. An $H$-{\it pseudolinear map} from $M$ to $N$ is an $\mathbb{F}$-linear map $\phi:M\rightarrow (H\tt H)\,\oH N$ such that
  \begin{equation*}
    \phi(h u)=\big((1\tt h)\,\oH \, 1\big)\, \phi(u),\qquad h\in H, u\in M.
  \end{equation*}
\end{definition}
\noi We denote the space of all such $\phi$ by Chom$(M,N)$. We define a left action of $H$ on Chom$(M,N)$ by
\begin{equation*}
   (h \phi)( u)=\big((h\tt 1)\,\oH \, 1\big)\, \phi(u).
  \end{equation*}

\vskip .3cm

Consider the map $\rho:\,$Chom$(M,N)\tt \, M\rightarrow H^{\tt \,2}\oH N$, given by $\rho(\phi\tt u):=\phi(u)$. By definition, it is $H^{\tt \,2}$-linear, so it is a polylinear map in $\mathcal{M}(H)$, see \cite{BDK} for details. We will also use the notation $\phi*u=\phi(u)$ and consider this as a pseudoproduct or action.

\vskip .2cm

\begin{proposition}
  Let $A$ be an associative $H$-pseudoalgebra, and let $M$ and $N$ be two finite left $A$-modules. Then, we have

  \vskip .1cm

  \noi (a) Chom$(M,N)$ is a left $A$-module with the following action:
  \begin{equation*}
    \big( a*\phi\big)(u):= a*(\phi *u)
  \end{equation*}
  \noi for $a\in A, \, \phi\in $Chom$(M,N)$ and $u\in M$, where the composition rules are those defined in (\ref{1}).

  \vskip .1cm

  \noi (b)  Chom$(M,N)$ is a right $A$-module with the following action:
  \begin{equation*}
    \big( \phi*a\big)(u):= \phi *(a *u).
  \end{equation*}

  \vskip .1cm

  \noi (c)  Chom$(M,N)$ is a bimodule over $A$.
\end{proposition}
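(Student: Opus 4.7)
The plan is to verify the three claims in sequence, with the central task being the careful book-keeping of the composition rule (\ref{1}) and its iterates. The first step, and the main obstacle, will be well-definedness: I would check that the prescribed formulas for $a*\phi$ and $\phi*a$, which a priori produce maps $M\to H^{\tt 3}\oH N$, genuinely arise from elements of $H^{\tt 2}\oH\,$Chom$(M,N)$ via the natural evaluation, and that the resulting maps $A\tt\,$Chom$(M,N)\to H^{\tt 2}\oH\,$Chom$(M,N)$ are $H^{\tt 2}$-linear. Here the finiteness hypothesis on $M$ and $N$ is essential: it ensures that Chom$(M,N)$ carries a well-behaved $H$-module structure, so that $H^{\tt 2}\oH\,$Chom$(M,N)$ embeds into the appropriate space of maps $M\to H^{\tt 3}\oH N$. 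The $H$-pseudolinearity of the outputs (i.e., that $a*\phi$ and $\phi*a$ still satisfy the defining identity of Chom) follows from the $H$-pseudolinearity of $\phi$ together with the way (\ref{1}) propagates the $H$-action across each tensor slot.

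Once well-definedness is established, the three module axioms become, after evaluating on $u\in M$, essentially tautological restatements of the associativity of the pseudoactions on $M$ and $N$. For (a), the identity $(a*b)*\phi=a*(b*\phi)$ in $H^{\tt 3}\oH\,$Chom$(M,N)$ reads, after evaluation,
\begin{equation*}
  (a*b)*(\phi*u)\ =\ a*\big(b*(\phi*u)\big),
\end{equation*}
which is the extended associativity of the left $A$-action on $N$ applied to $\phi*u\in H^{\tt 2}\oH N$. Part (b) is symmetric: both $(\phi*a)*b$ and $\phi*(a*b)$, evaluated on $u$, reduce via the extended compositions to $\phi*\big((a*b)*u\big)=\phi*\big(a*(b*u)\big)$, which is the associativity of the $A$-action on $M$. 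Part (c), the compatibility $(a*\phi)*b=a*(\phi*b)$, evaluates on $u$ to $a*\big(\phi*(b*u)\big)$ on both sides and is therefore automatic.

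In summary, the only genuinely nontrivial computation is the well-definedness check; the associativity arguments collapse to tautologies once phrased via the extended compositions of (\ref{1}). Most of the effort will therefore go into unwinding the $H^{\tt 2}$-module structure on Chom$(M,N)$ and confirming that the defining formulas yield elements of $H^{\tt 2}\oH\,$Chom$(M,N)$ rather than merely of a larger ambient space of $\mathbb{F}$-linear maps.
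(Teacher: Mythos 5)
Your proposal is correct and follows essentially the same route as the paper, which in fact gives no detailed argument at all: it simply states that the proposition ``follows immediately by the definitions of left and right modules over $A$, and the composition rules of polylinear maps.'' You rightly single out well-definedness of $a*\phi$ and $\phi*a$ as elements of $H^{\tt 2}\oH\,\mathrm{Chom}(M,N)$ (where finiteness of the modules enters, via Section~10 of \cite{BDK}) as the only substantive point, after which the module axioms reduce, exactly as you describe, to the extended associativity of the actions on $N$ and $M$.
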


\vskip .1cm

\noi The proof of this proposition follows immediately by the definitions of left and right modules over $A$, and the compositions  rules of polylinear maps.

\vskip .2cm

\begin{definition}
  Let $M$ and $N$ be two left $A$-modules. An {\it extension} $E$ of $N$ by $M$ is an $H$-split exact sequence of left $A$-modules
  \begin{equation*}
    0\longrightarrow M\longrightarrow E\longrightarrow N\longrightarrow 0.
  \end{equation*}
  Two extensions $E_1$ and $E_2$ are {\it equivalent} if there exists an isomorphism $h:E_1 \longrightarrow E_2$ of $A$-modules, such that the diagram
$$\begin{CD}
0@>>> M @>>\,   > E_{1} @>>\,   >N @>>>0\\
@. @V1_{M}VV @VhVV @VV1_{N}V\\
0@>>> M @>>\,   > E_{2} @>>\,   >N @>>>0,
\end{CD}$$
  is commutative.
\end{definition}

\vskip .1cm

The following theorem is one of the main results of this work.

\begin{theorem}\label{th1}
  Given two finite left $A$-modules $M$ and $N$, the set of equivalence classes of $H$-split extensions
  \begin{equation*}
    0\longrightarrow M\longrightarrow E\longrightarrow N\longrightarrow 0
  \end{equation*}

\vskip .3cm

\noi of $N$ by $M$ are in one-to-one correspondence with elements of $H^{\, 1}(A, {\rm Chom}(N,M))$.
\end{theorem}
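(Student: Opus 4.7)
The plan is to carry out the standard ``section-of-an-extension'' construction, adapted to the pseudoalgebra framework. First, I would fix an $H$-split extension $0\to M\xrightarrow{\iota} E\xrightarrow{\pi} N\to 0$ and pick an $H$-linear section $\sigma\in\mathrm{Hom}_H(N,E)$ with $\pi\circ\sigma=\mathrm{id}_N$. From this section, define a $1$-cochain $\varphi_\sigma\in C^{\,1}(A,\mathrm{Chom}(N,M))=\mathrm{Hom}_H(A,\mathrm{Chom}(N,M))$ by
\[
\varphi_\sigma(a)(n) \;:=\; a\oE\sigma(n)\;-\;\widetilde\sigma\bigl(a\oN n\bigr),
\]
where $\widetilde\sigma:=\mathrm{id}_{H^{\tt 2}}\oH\sigma$ is the trivial extension of $\sigma$ to a map $H^{\tt 2}\oH N\to H^{\tt 2}\oH E$. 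Applying the trivial extension of $\pi$ to both sides, one sees that the right-hand side lies in $\ker(\mathrm{id}_{H^{\tt 2}}\oH\pi)=H^{\tt 2}\oH M$; the $H$-linearity of $\sigma$ together with the $H^{\tt 2}$-linearity of the pseudoproducts then give both that $\varphi_\sigma(a)$ is $H$-pseudolinear (so belongs to $\mathrm{Chom}(N,M)$) and that $a\mapsto\varphi_\sigma(a)$ is $H$-linear.

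Next, I would verify the cocycle condition $d_1\varphi_\sigma=0$, i.e.\ that $\varphi_\sigma$ is a derivation into the $A$-bimodule $\mathrm{Chom}(N,M)$ defined in the preceding proposition. This is forced by the associativity of the $E$-action: expanding both sides of $(a*b)\oE\sigma(n)=a\oE\bigl(b\oE\sigma(n)\bigr)$ and systematically rewriting each factor of the form $c\oE\sigma(n')$ as $\varphi_\sigma(c)(n')+\widetilde\sigma(c\oN n')$, the purely $N$-valued terms cancel by associativity of the $N$-action, and what is left is exactly $\varphi_\sigma(a*b)=a*\varphi_\sigma(b)+\varphi_\sigma(a)*b$, with the outer pseudoproducts being those of the $A$-bimodule structure on $\mathrm{Chom}(N,M)$.

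To check well-definedness modulo coboundaries, I would compare two $H$-linear sections $\sigma,\sigma'$: since $\pi(\sigma-\sigma')=0$, we have $\sigma-\sigma'=\iota\circ\psi$ for some $\psi\in\mathrm{Hom}_H(N,M)$, which I lift canonically to $\widehat\psi\in\mathrm{Chom}(N,M)$ by $\widehat\psi(n):=(1\tt 1)\oH\psi(n)$. A direct computation using formula (\ref{6}) and the bimodule structure on $\mathrm{Chom}(N,M)$ then yields $\varphi_{\sigma'}-\varphi_\sigma=d_0\widehat\psi$. Moreover, an equivalence of extensions $h:E_1\to E_2$ intertwines sections up to a $\psi$ as above, so the rule $[\sigma]\mapsto[\varphi_\sigma]$ descends to a well-defined map from equivalence classes of $H$-split extensions into $H^{\,1}(A,\mathrm{Chom}(N,M))$.

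For the inverse, given a cocycle $\varphi\in Z^{\,1}(A,\mathrm{Chom}(N,M))$, I would take $E_\varphi:=M\oplus N$ as an $H$-module and put the pseudoaction
\[
a\overset{E_\varphi}{*}(m,n):=\bigl(a\oM m\,+\,\varphi(a)(n)\,,\ a\oN n\bigr),
\]
each coordinate living in the appropriate $H^{\tt 2}\oH(-)$. The derivation identity for $\varphi$ is precisely what upgrades associativity of the $M$- and $N$-actions to associativity of the $E_\varphi$-action, and replacing $\varphi$ by $\varphi+d_0\widehat\psi$ produces an isomorphic split extension via $(m,n)\mapsto(m+\psi(n),n)$. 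Checking that the two constructions are mutually inverse is then a direct verification. The main technical obstacle throughout is the careful bookkeeping of the composition rules (\ref{1}), (\ref{4}), (\ref{5}) and (\ref{7}): every identity to be verified lives in towers $H^{\tt k}\oH(-)$ and requires tracking how the iterated comultiplication interacts with $\sigma$ and $\pi$, a layer of complication that collapses to a one-line check in the classical case $H=\mathbb{C}$.
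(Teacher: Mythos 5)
Your overall strategy is the same as the paper's: encode an $H$-split extension by the defect $\gamma(a)(v)=a\oE v-a\oN v$ of an $H$-splitting, check via associativity of the $E$-action that it is a $1$-cocycle valued in ${\rm Chom}(N,M)$, and invert the construction by deforming the action on $M\oplus N$. Those parts of your outline, including the verification that $\varphi_\sigma(a)$ is $H$-pseudolinear and that $a\mapsto\varphi_\sigma(a)$ is $H$-linear, go through.

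The genuine gap is in the coboundary step. Your ``canonical lift'' $\widehat\psi(n):=(1\tt 1)\oH\psi(n)$ of $\psi\in{\rm Hom}_H(N,M)$ is \emph{not} an element of ${\rm Chom}(N,M)$ unless $H=\FF$: in $(H\tt H)\oH M$ one has $\widehat\psi(hn)=(1\tt 1)\oH h\psi(n)=\De(h)\oH\psi(n)$, whereas pseudolinearity requires $((1\tt h)\oH 1)\widehat\psi(n)=(1\tt h)\oH\psi(n)$; already for $H=\FF[\d]$ and $h=\d$ the two sides differ by $(\d\tt 1)\oH\psi(n)\neq 0$. Consequently $d_0\widehat\psi$ is undefined and the identities $\varphi_{\sigma'}-\varphi_\sigma=d_0\widehat\psi$ and the coboundary twist of $E_\varphi$ do not parse as written. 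The underlying point is that ${\rm Hom}_H(N,M)\simeq\FF\tt_H{\rm Chom}(N,M)\simeq C^{\,0}(A,{\rm Chom}(N,M))$ is a \emph{quotient} of ${\rm Chom}(N,M)$ (surjectivity onto ${\rm Hom}_H(N,M)$ uses finiteness of $N$, see Remark 10.1 of \cite{BDK}), and it admits no closed-formula section. The repair --- which is exactly where the paper's proof concentrates its effort, in (\ref{23})--(\ref{25}) --- is to choose \emph{any} $\phi\in{\rm Chom}(N,M)$ with $\phi_1=\psi$, where $\phi_1(v)=\sum_i\varepsilon(h_i)u_i$ for $\phi(v)=\sum_i(h_i\tt 1)\oH u_i$, and then verify by an explicit computation with iterated comultiplications that $\big(d_0\phi\big)(a)(v)=a*(\psi(v))-\psi(a*v)$, independently of the chosen preimage $\phi$. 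With that substitution your argument closes.
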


\begin{proof}
  Let $ 0\longrightarrow M\overset    {  i}{\longrightarrow} E\overset    {  p}{\longrightarrow} N\longrightarrow 0 $ be an extension of $A$-modules, which is split over $H$. Choose a splitting $E=M\oplus N=\{(u,v)\, | \, u\in M, v\in N\}$ as $H$-modules. The fact that $i$ and $p$ are homomorphisms of left $A$-modules implies $(a\in A, u\in M, v\in N)$
  \begin{equation}\label{20}
 a\oE u=a\oM u\qquad{\rm and}\qquad   a\oE v- a\oN v:= \gamma(a)(v)\in H^{\tt\, 2}\oH M.
  \end{equation}

  \vskip .1cm

\noi Using the $H^{\tt \, 2}$-linearity of the action in the module $E$, it is easy to see  that $\gamma(a)\in \, $Chom$(A,M)$ and $\gamma:A\longrightarrow \, $Chom$(N,M)$ is $H$-linear. In other words, we have that  $\gamma\in C^{\,1}(A, $Chom$(N,M)) =
$
Hom$_{H}(A,$Chom$(N,M))$.

Using associativity of $E$, we have (for $a,b \in A, u\in M, v\in N$)

\begin{equation*}
  (a*b)*(u,v)=\big( (a*b) * u + \gamma(a*b)(v)\, ,\, (a*b) * v\big),
\end{equation*}
\noi and
\begin{align*}
  a*(b*(u,v)) & = a*\big( b * u +  \gamma(b)(v) \, ,\, b * v\big)\\
              & =\big(( a * (b * u))+a *\big( \gamma(b)(v)\big) +\gamma(a)(b* v)\, ,\, a* (b* v)\big).
\end{align*}
Subtracting these two equations and using (\ref{20}), we have
\begin{equation*}
  \gamma(a*b)(v)=a*\big( \gamma(b)(v)\big) + \gamma(a)(b*v)
\end{equation*}
and using the definition of the $A$-bimodule structure in Chom$(N,M)$, we obtain that $\gamma(a*b)(v)=\big( a*\gamma(b)\big)(v) +\big( (\gamma(a))*b\big) (v)$ for all $v\in N$. Therefore,  the associativity in $E$ is equivalent to
\begin{equation*}
  \big( d_1 \gamma)(a,b)=a*\gamma(b) - \gamma(a*b) + (\gamma(a))*b=0.
\end{equation*}

If we have two isomorphic extensions $E$ and $E'$ associated to the closed elements $\gamma$ and $\gamma'$, and we choose a compatible splitting over $H$, then the isomorphism $h:E\longrightarrow E'$ is determined by an element $\be\in\, $Hom$_H(N,M)$, that is $h:M\oplus N\rightarrow M\oplus N'$, with $h(u,v)=(u+\beta(v),v)'$. Using that
\begin{equation*}
  h(a*(u,v))=(a*u+\gamma(a)(v)+\beta(a*v)\, , \, a*v),
\end{equation*}
and
\begin{align*}
  a*(h(u,v))  & = a*(u +\beta(v)\, ,\,  v)' \\
   & = (a*u+a*(\beta(v))+\gamma'(a)(v)\, ,\,  a*v),
\end{align*}
we have
\begin{equation}\label{21}
  \gamma(a)(v)=\gamma'(a)(v)+a*(\beta(v))-\beta(a*v).
\end{equation}

\vskip .2cm

\noi Now, using that
\begin{equation}\label{23}
{\rm Hom}_H(N,M)\simeq \mathbb{F}\tt_H {\rm Chom}(N,M)\simeq C^{\, 0}(A, {\rm Chom}(N,M)),
\end{equation}
(see Remark 10.1 in \cite{BDK} for details), we need to prove that (\ref{21}) is equivalent to $\gamma=\gamma\,' + (d_0 \beta)$. In order to simplify the notation, recall that any element in $H^{\tt 2}\oH W$ can be written uniquely in the form $\sum_i\, (h_i\tt 1)\oH \,w_i$, where $\{ h_i\}$ is a fixed $\mathbb{F}$-basis of $H$.  In more details, given $\phi\in\,$Chom$(N,M)$, we define the map $\phi_1:N\rightarrow M$ as follows: if $\phi(v)=\sum_i\, (h_i\tt 1)\oH u_i$, then $\phi_1(v)=\sum_i \, \varepsilon(h_i)\, u_i$. The map $\phi_1$ is $H$-linear and establishes the isomorphism in (\ref{23}). Let $\phi\in\,$Chom$(N,M)$ such that $\phi_1=\beta$. Observe that
\begin{equation*}
  \big(d_0\, \phi\big)(a)=\big[({\rm id}\tt \varepsilon)\oH {\rm id}_{\rm Chom}\big] \big(a*\phi\big) -
  \big[(\varepsilon\tt {\rm id})\oH {\rm id}_{\rm Chom}\big] \big(\phi*a\big).
\end{equation*}
Hence, we need to prove that (for $v\in N$)
\begin{equation}\label{24}
  \Big(\big[({\rm id}\tt \varepsilon)\oH {\rm id}_{\rm Chom}\big] \big(a*\phi\big)\Big)(v)  = a*\big( \beta(v)\big)
\end{equation}
and
\begin{equation}
  \Big(\big[(\varepsilon\tt {\rm id})\oH {\rm id}_{\rm Chom}\big] \big(\phi*a\big)\Big)(v)  = \beta (a*v).\label{25}
\end{equation}

\vskip .2cm

\noi Now, we shall prove (\ref{24}), and the proof of (\ref{25}) is similar.
First of all, we need to see  that
\begin{equation}\label{26}
  \Big(\big[({\rm id}\tt \varepsilon)\oH {\rm id}_{\rm Chom}\big] \big(a*\phi\big)\Big)(v)  =
  \Big[({\rm id}\tt \varepsilon \tt {\rm id})\oH {\rm id}_{M}\Big] \Big(\big(a*\phi\big)(v)\Big),
\end{equation}
Observe that $\big[({\rm id}\tt \varepsilon)\oH {\rm id}_{\rm Chom}\big] \big(a*\phi\big)= \sum_i \, \varepsilon(g_i)\, f_i \varphi_i$, if $a*\phi=\sum_i\, (f_i\tt g_i)\oH \varphi_i$. Hence, we have that
\begin{align}\label{27}
  \Big(\big[({\rm id}\tt \varepsilon)\oH {\rm id}_{\rm Chom}\big] \big(a*\phi\big)\Big)(v) & = \sum_i \, \varepsilon(g_i)\, (f_i \varphi_i)(v)= \sum_i \,\varepsilon(g_i)\, \big[(f_i \tt 1)\oH 1_M\big](\varphi_i(v) ) \nonumber \\
   & = \sum_{i,j} \,\varepsilon(g_i)\, \big(f_i f_{ij} \tt g_{ij}\big )\oH\, u_{ij},
\end{align}
where $\varphi_i(v)=\sum_j\, (f_{ij}\tt g_{ij})\oH \,u_{ij}$.
On the other hand, using the previous notation, we have that
\begin{equation*}
  (a*\phi)(v)=\sum_{i,j} \, \big(f_i f_{ij(1)}\!\tt g_i f_{ij(2)}\!\tt g_{ij}\big)\oH \, u_{ij},
\end{equation*}
obtaining that
\begin{equation}\label{28}
  \Big[({\rm id}\tt \varepsilon \tt {\rm id})\oH {\rm id}_{M}\Big] \Big(\big(a*\phi\big)(v)\Big)=\sum_{i,j} \,\varepsilon(g_i)\, \big(f_i f_{ij} \tt g_{ij}\big )\oH\, u_{ij}.
\end{equation}
Therefore, combining (\ref{27}) and (\ref{28}),  we have proved (\ref{26}).

If $\phi(v)=\sum_i\, (h_i\tt 1)\oH u_i$ and $a*u_i=\sum_j \, (h_{j}\tt 1)\oH \, u_{ij}$, then
\begin{equation*}
  a*(\phi(v))=\sum_{i,j}\, \big(h_{j}\tt h_i\tt 1\big)\oH \, u_{ij},
\end{equation*}
and using that by definition $\big(a*\phi\big)(v)=a*(\phi(v))$, then we have
\begin{equation*}
  \Big[({\rm id}\tt \varepsilon \tt {\rm id})\oH {\rm id}_{M}\Big] \Big(\big(a*\phi\big)(v)\Big)=\sum_{i,j}\, \varepsilon(h_i) \big(h_{j}\tt 1\big)\oH u_{ij}.
\end{equation*}
On the other hand, since $\beta(v)=\phi_1(v)=\sum_i\, \varepsilon(h_i)\, u_i$, then
\begin{equation*}
  a*(\beta(v))=\sum_i\, \varepsilon(h_i)(a*u_i)=\sum_i\,  \varepsilon(h_i) \big(h_{j}\tt 1\big)\oH u_{ij},
\end{equation*}
finishing the proof of (\ref{24}).
\

Conversely, given an element of $H^{\, 1}(A,$Chom$(A,M))$, we can choose a representative $\gamma\in C^{\, 1}(A,$Chom$(A,M))$ and define an  action of $A$ on $E=M\oplus N$ by (\ref{20}), which will depend only on the cohomology class of $\gamma$, finishing the proof.
\end{proof}

\

\section{Second cohomology, abelian extensions and first order deformations}\lbb{tttt}

\

In the first part of this section we describe the abelian extensions, see \cite{D1} for details.

\begin{definition}
  An {\it abelian extension} of an associative $H$-pseudoalgebra $A$ by an $A$-bimodule $M$, is an associative $H$-pseudoalgebra $E$ in a short exact sequence
  \begin{equation*}
    0\longrightarrow M\longrightarrow E\longrightarrow A\longrightarrow 0,
  \end{equation*}
where $M*M=0$ in $E$. Two abelian extensions $E_1$ and $E_2$ are {\it equivalent} if there exists an isomorphism $f:E_1\rightarrow E_2$ such that the diagram
$$\begin{CD}
0@>>> M @>>\,   > E_{1} @>>\,   >A @>>>0\\
@. @V1_{M}VV @VfVV @VV1_{A}V\\
0@>>> M @>>\,   > E_{2} @>>\,   >A @>>>0,
\end{CD}$$
is commutative.
\end{definition}

\begin{theorem}\label{th2}
  {\rm (Proved in \cite{D1}).} The equivalence classes of $H$-split abelian extensions of $A$ by an $A$-bimodule $M$ correspond bijectively to $H^2(A,M)$.
\end{theorem}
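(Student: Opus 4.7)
The plan is to follow the template used in the proof of Theorem \ref{th1}, but one level higher: the relevant cochain is bilinear, and the extension is now of the pseudoalgebra $A$ itself rather than of an $A$-module.

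First, I would unpack the extension data. Given an $H$-split abelian extension $0 \to M \to E \to A \to 0$, choose an $H$-module splitting $E = M \oplus A$. The facts that the inclusion of $M$ is an $A$-bimodule map, the projection onto $A$ is a pseudoalgebra map, and $M\oE M = 0$ in $E$, force the pseudoproduct in $E$ to take the form
\begin{equation*}
  a\oE b \,=\, a*b + \varphi(a,b), \quad a\oE u \,=\, a*u, \quad u\oE a \,=\, u*a, \quad u\oE u' \,=\, 0,
\end{equation*}
for $a,b\in A$ and $u,u'\in M$, where the remainder $\varphi : A\tt A \to H^{\tt 2}\oH M$ is automatically $H^{\tt 2}$-linear and thus defines an element of $C^2(A,M)$.

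Second, I would translate the associativity of $E$ into a cocycle condition. Expanding $(a\oE b)\oE c$ and $a\oE(b\oE c)$ in $H^{\tt 3}\oH E$ via the conventions (\ref{4}), (\ref{5}), (\ref{7}), the $A$-components cancel by the associativity of $A$, while a direct comparison of the $M$-components yields
\begin{equation*}
  \big[(a\oE b)\oE c - a\oE(b\oE c)\big]_M \,=\, -\,(d_2\varphi)(a,b,c).
\end{equation*}
Hence associativity of $E$ is exactly equivalent to $d_2\varphi=0$. This computation is the main obstacle: each pseudoproduct has to be expanded using the precise conventions (\ref{4})--(\ref{7}), with the iterated comultiplication inserted in the correct slot, and the $M$-components then matched term by term in $H^{\tt 3}\oH M$, in the same spirit as the careful bookkeeping carried out in the proof of Proposition \ref{p1}.

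Third, I would handle the dependence on the splitting and establish the bijection. If $E_1$ and $E_2$ are equivalent via $f$, then after compatible choices of $H$-splittings the isomorphism has the form $f(u,a) = (u+\beta(a),a)$ for some $\beta \in \mathrm{Hom}_H(A,M) = C^1(A,M)$, and requiring $f$ to intertwine the two pseudoproducts yields $\varphi_1 - \varphi_2 = d_1\beta$. Applying the same argument to two different splittings of a single extension shows that the cohomology class is independent of the chosen splitting, so $[E]\mapsto [\varphi]$ is well defined and injective into $H^2(A,M)$. For surjectivity, given a representative $\varphi \in Z^2(A,M)$ I would equip $E_\varphi := M\oplus A$ with the pseudoproduct prescribed in the first step; $H^{\tt 2}$-linearity is automatic, and associativity of $E_\varphi$ follows from $d_2\varphi = 0$, the associativity of $A$, and the bimodule axioms for $M$, by running the second-step calculation in reverse. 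The ideal condition $M\oE M = 0$ is built into the construction, so $E_\varphi$ is an abelian extension whose class maps back to $[\varphi]$.
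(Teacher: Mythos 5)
The paper does not actually prove this theorem: it is stated with the attribution ``(Proved in \cite{D1})'' and the argument is deferred entirely to Dolguntseva's paper. Your proposal supplies the standard Hochschild-type argument that one would expect there, and it is correct: the decomposition $a\oE b = a*b+\varphi(a,b)$ with $\varphi\in C^2(A,M)$, the identification of associativity of $E$ with $d_2\varphi=0$ (your sign is right), and the identification of a change of splitting or an equivalence $f(u,a)=(u+\beta(a),a)$ with adding $d_1\beta$, all match the template the author uses for Theorem~\ref{th1}; the term $\beta(a)\oE\beta(b)$ that would otherwise obstruct the equivalence computation vanishes precisely because $M*M=0$. So your write-up is consistent with, and fills in, what the paper only cites.
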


\

The next part of this section is a new contribution.

\begin{definition}
 (a) Let $t$ be a formal variable and $(A,*)$ an associative $H$-pseudoalgebra. A {\it first order deformation} of $A$ is a family of $H$-pseudoproducts of the form
  \begin{equation*}
    a \, \hat{*}\, b=a*b+t\, f(a,b)
  \end{equation*}
with $a,b\in A$, where $f:A\tt A\rightarrow H^{\tt\, 2}\oH A$ is an $H^{\tt\, 2}$-linear map (independent of $t$), such that $(A,\hat{*})$ is a family of associative $H$-pseudoalgebras up to the first order in $t$ (i.e. modulo $t^2$). More precisely, the $H$-pseudoproduct $\hat{*}$ is an $H^{\tt\, 2}$-linear map and it satisfies
\begin{equation}\label{B}
  (a \, \hat{*} \, b) \, \hat{*} \,  c= a \, \hat{*} \, ( b \, \hat{*} \,  c)\ \ {\rm mod}\ t^2,
\end{equation}
where $H$ acts trivially on $t$.

\vskip .1cm

\noi (b) Two first order deformations $\o1$   and $\ooo$ of $A$ are {\it equivalent}  if there exists a family $\phi_t:A\rightarrow A[t]$, of $H$-linear maps of the form $\phi_t={\rm id}_A + t \, g$, where $g:A\rightarrow A$ is an $H$-linear map such that
\begin{equation}\label{C}
  \phi_t(a\! \o1 \! b)=\phi_t(a)\!  \ooo \! \phi_t(b) \ \ \ \ {\rm mod}\ t^2,
\end{equation}
for $a,b\in A$.
\end{definition}

The following theorem is the second main result of this work.

\vskip .1cm

\begin{theorem}\label{th3}
  The equivalence classes of first order deformations of an associative $H$-pseudo- algebra $A$ (leaving the $H$-action intact) correspond bijectively to $H^2(A,A)$.
\end{theorem}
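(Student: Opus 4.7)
The plan is to adapt Gerstenhaber's classical correspondence between first-order deformations of associative algebras and second Hochschild cohomology to the present $H$-pseudoalgebra setting, using the composition conventions (\ref{1}), (\ref{4}), (\ref{5}), (\ref{7}) introduced in Sections 2 and 3. The bijection will be built in two stages: first, I will show that the associativity condition (\ref{B}) for $\hat{*} = * + tf$ is equivalent to $f$ being a $2$-cocycle; second, I will show that the equivalence (\ref{C}) of two such deformations is equivalent to the difference of the associated cocycles being a coboundary.

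For the first stage, by comparing the $H^{\tt 2}$-linearity of $\hat{*}$ with that of $*$, the map $f:A\tt A\to H^{\tt 2}\oH A$ is automatically $H^{\tt 2}$-linear, hence $f\in C^{\, 2}(A,A)$. Expanding both sides of (\ref{B}) using the extensions of $\hat{*}$ and $f$ to $(H^{\tt 2}\oH A)\tt A$ and $A\tt (H^{\tt 2}\oH A)$ yields, modulo $t^2$,
\begin{align*}
(a \, \hat{*} \, b) \, \hat{*} \, c &\equiv (a*b)*c + t\, f(a,b)*c + t\, f(a*b,c), \\
a \, \hat{*} \, (b \, \hat{*} \, c) &\equiv a*(b*c) + t\, a*f(b,c) + t\, f(a,b*c).
\end{align*}
Subtracting and using associativity of $*$, the condition (\ref{B}) becomes exactly
\begin{equation*}
a*f(b,c) - f(a*b,c) + f(a,b*c) - f(a,b)*c = 0,
\end{equation*}
that is, $(d_2 f)(a,b,c)=0$. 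Hence associativity of $\hat{*}$ mod $t^2$ is equivalent to $f \in Z^{\, 2}(A,A)$.

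For the second stage, given an equivalence $\phi_t=\mathrm{id}_A + tg$ between deformations $a\!\o1\! b = a*b + tf_1(a,b)$ and $a\!\ooo\! b=a*b+tf_2(a,b)$, I would expand both sides of (\ref{C}) modulo $t^2$. Using that $g$ extends trivially to $H^{\tt 2}\oH A$ as $\mathrm{id}_{H^{\tt 2}}\oH g$, this yields
\begin{align*}
\phi_t(a\!\o1\! b) &\equiv a*b + t\, f_1(a,b) + t\, g(a*b), \\
\phi_t(a)\!\ooo\! \phi_t(b) &\equiv a*b + t\, f_2(a,b) + t\, g(a)*b + t\, a*g(b).
\end{align*}
Equating the coefficients of $t$ gives
\begin{equation*}
f_1(a,b) - f_2(a,b) = a*g(b) - g(a*b) + g(a)*b = (d_1 g)(a,b),
\end{equation*}
so $f_1$ and $f_2$ are cohomologous. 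Conversely, for any $f \in Z^{\, 2}(A,A)$ and any $g \in C^{\, 1}(A,A)$, the same computation read in reverse shows that $*+t(f+d_1 g)$ is equivalent to $*+tf$ via $\mathrm{id}_A - tg$. Hence the class $[f]\in H^{\, 2}(A,A)$ depends only on the equivalence class of the deformation, and every class arises this way, giving the desired bijection.

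The main technical obstacle lies not in the algebraic idea but in the careful bookkeeping with the composition conventions when extending $\hat{*}$, $f$, and $g$ to elements of $H^{\tt 2}\oH A$ and $H^{\tt 3}\oH A$; one must verify that the displayed expansions above make sense as genuine equalities in $H^{\tt 3}\oH A$ and are compatible with the deformation parameter $t$, on which $H$ acts trivially. However, these are precisely the same manipulations that ensure $d_2\circ d_1=0$ and that parallel the computations in the proof of Proposition \ref{p1}, so no new machinery beyond Section 3 is required.
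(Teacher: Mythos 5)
Your proposal is correct and follows essentially the same route as the paper: expand (\ref{B}) modulo $t^2$ to identify the $t^1$-coefficient with the cocycle condition $d_2 f=0$, then expand (\ref{C}) to identify equivalence of deformations with $f_1-f_2=d_1 g$. The extra remarks on the converse direction and on the bookkeeping with the composition conventions are consistent with, and slightly more explicit than, what the paper records.
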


\begin{proof}
  Let $(A,*)$ be an associative $H$-pseudoalgebra and let $\hat{*}$  be given by
  \begin{equation}\label{A}
    a \, \hat{*}\, b=a*b+t\, f(a,b)
  \end{equation}
with $a,b\in A$, where $f:A\tt A\rightarrow H^{\tt\, 2}\oH A$ is an $H^{\tt\, 2}$-linear map. Then, using (\ref{A}), we take the expansions in (\ref{B}) mod $t^2$. By a direct computation, we can see that the coefficient of $t^0$ corresponds exactly to the associativity property of $*$, and the coefficient of $t^1$ corresponds exactly to
\begin{equation*}
  f(a*b,c)+f(a,b)*c=f(a,b*c)+a*f(b,c).
\end{equation*}
\vskip .1cm

\noi Therefore, we have seen that (\ref{A}) is a first order deformation of $A$ if and only if $f\in Z^2(A,A)$.

Now, consider two first order deformations of $A$ given by $a \!  \o1 \!  b=a*b+t\, f_1(a,b)$ and $a \!  \ooo \!  b=a*b+t\, f_2(a,b)$. They are equivalent if and only if there exists $g\in\, $Hom$_H(A,A)$ such that $\phi_t:={\rm id}_A + t \, g$ satisfies (\ref{C}).   A direct computation shows that (\ref{C}) is equivalent to
\begin{equation*}
  f_1(a,b)-f_2(a,b)=a*g(b)-g(a*b)+ g(a)*b,
\end{equation*}
for all $a,b \in A$, Therefore, it is equivalent to $f_1-f_2=d_1 g$, finishing the proof.
\end{proof}

\

\section{Cohomology of associative conformal algebras}\lbb{tttt111}

\

In this final section, we restrict the definitions and results of the previous sections to associative conformal algebras.
Conformal algebras are exactly $H$-pseudoalgebras over the polynomial Hopf algebra $H=\cp$, with coproduct $(\Delta f)(\d)=f(\d\tt 1 + 1\tt \d )$, counit $\varepsilon (f)=f(0)$, and antipode $(S f)(\d)=f(-\d)$. The structure of a conformal algebra on  a $\cp$-module $A$ is given by a $\mathbb{C}$-linear    map  $A\tt A\rightarrow A[\la]$, $a\tt b\mapsto a_\la b$, called the $\la$-product.
The relation between pseudoproduct and   $\la$-product is given by
\begin{equation*}
  a*b=\big(a_\la b\big)_{|_{\la=- \d \tt 1}}
\end{equation*}

\noi The $H^{\tt 2}$-linearity on  $*$ corresponds to the {\it sesquilinearity}:
\begin{equation}\label{51}
  (\p a)_\la b=-\la (a_\la b),\qquad {\rm and}\ \  a_\la (\p
 b)=(\la+\p) (a_\la b).
\end{equation}
The conformal algebra is called {\it associative} if
\begin{equation*}
  (a_\la b)_{\la + \mu} \, c=a_\la (b_\mu c),
\end{equation*}
which is the restriction of the associative axiom of a pseudoproduct.

\begin{definition}
  Let $A$ be an associative conformal algebra.

  \vskip .1cm

  \noi (a) A {\it left conformal module} over $A$ is a $\cp$-module $M$ with a $\mathbb{C}$-linear map $A\otimes M \longrightarrow \kk[\la]\otimes M$,
 $a\otimes u  \mapsto a_\la u$, called the $\la\,$-action,
satisfying the properties $(a,\, b \in A,\ u\in M)$:

\vskip .3cm

\noindent    $\qquad \qquad \qquad\qquad \qquad (\p a)_\la u=-\la \, a_\la u   ,  \qquad  a_\la(\p u)=(\la +\p)\, (a_\la u) ,$

\vskip .3cm

\noindent   $\  \qquad\qquad\qquad\qquad\qquad a_\la ( b_\mu u)=(a_{ \la} b)_{\la+\mu} u$.

\vskip .3cm

\noindent (b) A {\it right conformal module} over $A$ is a $\cp$-module $M$ with a $\mathbb{C}$-linear map $M\otimes A \longrightarrow \kk[\la]\otimes M$,
 $u\otimes a  \mapsto u_\la a$, called the $\la\,$-action,
satisfying the corresponding sesquilinearity and
%
%
\begin{equation*}
 u_\la ( a_\mu b)=(u_{ \la} a)_{\la+\mu} b.
\end{equation*}

\vskip .2cm

\noindent (c) A {\it conformal bimodule} $M$ over $A$ is a left and right conformal module that satisfies
\begin{equation*}
  a_\la ( u_\mu b)=(a_{ \la} u)_{\la+\mu} b.
\end{equation*}
\end{definition}

\noi The notion of conformal bimodule was introduced after Definition 1.4 in \cite{BKV}. A  conformal module is called $finite$ if it is finitely generated over $\cp$.

\vskip .2cm

Now, we describe Chom$(M,N)$ in the conformal case, that is $H=\cp$. Let $M$ and $N$ be  two $\cp$-modules. A {\it conformal linear map} from $M$ to
 $N$ is a $ \mathbb{C}$-linear map $f_\la :M\to N[\la]$,  such that
 $$
 f_\la  (\p u)= (\la + \p) \, f_\la(u),
 $$

 \vskip .2cm

\noi for  $u\in M$. We denote the vector space of all such maps by
 Chom$(M,N)$. It has an structure of  a $\cp$-module given by
$$
(\p f)_{\la}(u):= -\la \,f_{\la}(u).
$$

 \vskip .2cm

\noi If $M$ and $N$ are finite left conformal $A$-modules, then   Chom$(M,N)$ is a left conformal  $A$-module with the action (for $a\in A, u\in M$)
\begin{equation*}
(a_\la f)_\mu u : =a_\la(f_{\mu-\la}u),
\end{equation*}

\vskip .1cm

\noi and it is a right conformal  $A$-module with the action (for $a\in A,  u\in M$)
\begin{equation*}
(f_\la a)_\mu u : =f_\la(a_{\mu-\la}u).
\end{equation*}

\vskip .1cm

\noi With these structures, it is a conformal bimodule over $A$.

\vskip .3cm

In \cite{BKV}, the Hochschild cohomology group was defined and the space of $n$-cochains has $n$ variables, and it was necessary to take certain quotient.

In \cite{DK}, for the case of Lie conformal algebras, the definition was improved by taking $n-1$ variables. Following this idea, we define the Hochschild cohomology for an associative conformal algebra $A$ and a bimodule $M$ over $A$. The space of $n$-cochains $C^{\, n}(A,M)$ consists of all maps
\begin{equation*}
  \varphi_{\la _1,\dots ,\la_{n-1}} :A^{\tt n}\longrightarrow M[\la_1,\dots ,\la_{n-1}],
\end{equation*}
such that (here we use that $H^{\tt n}\oH M\simeq H^{\tt (n-1)}\tt M$ and the $H^{\tt n}$-linearity in (\ref{hhh}) translate into the following sesquilinearity properties)
\begin{equation*}
  \varphi_{\la _1,\dots ,\la_{n-1}}(a_1,\dots , \d a_i,\dots , a_n)=
  - \la_i\, \varphi_{\la _1,\dots ,\la_{n-1}}(a_1,\dots , a_n),\qquad i=1,\dots , n-1,
\end{equation*}
and
\begin{equation*}
  \varphi_{\la _1,\dots ,\la_{n-1}}(a_1,\dots , \d a_n)=(\d +\la_1 + \cdots + \la_{n-1})\, \varphi_{\la _1,\dots ,\la_{n-1}}(a_1,\dots , a_n).
\end{equation*}

\

\noi The differential turns into

\begin{align*}
  \big( d_n\, \varphi\big)_{\la _1,\dots ,\la_{n}}(a_1,\dots ,  a_{n+1}) & = (a_1)_{\la_1} \varphi_{\la _2,\dots ,\la_{n}}(a_2,\dots ,  a_{n+1}) \\
   & +\, \sum_{i=1}^n\, (-1)^i\, \varphi_{\la _1,\dots ,\la_i+\la_{i+1},\dots ,\la_{n}}(a_1,\dots, (a_i)_{\la_i} (a_{i+1}),\dots ,  a_{n+1}) \\
   & +\, (-1)^{n+1}\, \varphi_{\la _1,\dots ,\la_{n-1}}(a_1,\dots , a_n)_{(\la _1+\dots +\la_{n})} a_{n+1}.
\end{align*}

\vskip .3cm

Now, we write the details of the lowest degree cohomologies. First of all, we have $C^{\, 0}(A,M)\simeq M/\d M$ and $C^{\, 1}(A,M)=\,\, $Hom$_{\cp} (A,M)$. In order to define the differential $d_0$, we need the following ideas.
Choosing a set of generators $\{u_j \}$ of the $\cp$-module $M$, we can write for $a\in A$ and $u\in M$
\begin{equation*}
  a_\la u =\sum_k\, Q_k(\la\, ,\p )\, u_k,
\end{equation*}
where $Q_k$ are some polynomials in $\la$ and $\d$. Taking
\begin{equation*}
  P_k(x,y):=Q_k (-x,x+y),
\end{equation*}
the correspondent left pseudoaction of $A$ on $M$ is given by the $\cp^{\tt 2}$-linear map $*:A\tt M\rightarrow (H\tt H)\oH M$ defined by
\begin{equation*}
  a*u=\sum_k\, P_k(\d\tt  1 , 1\tt \d)\,\oH u_k.
\end{equation*}
We consider similar formulas for the right conformal and pseudoactions. That is, if $u_\la a =\sum_i\, S_i(\la\, ,\p )\, u_i$, then $u*a=\sum_i\, R_i(\d\tt  1 , 1\tt \d)\,\oH \, u_i$, where $R_i(x,y):=S_i(-x,x+y)$. Now, we apply formula (\ref{6}). If $\varphi(1)=u$, then
\begin{equation*}
  \big( d_0\, \varphi\big)(a)=\sum_k\, P_k(\d , 0)\, u_k\, - \, \sum_i\, R_i(0, \d)\, u_i.
\end{equation*}
Then, in the conformal case, we obtain
\begin{equation*}
  \big( d_0\, \varphi\big)(a)=\sum_k\, Q_k(-\d,\d)\, u_k -\sum_i\, S_i(0,\d)\, u_i = a_{_{-\d}} u- u_{_{\,0}} a.
\end{equation*}
Therefore, $H^{\, 0}(A,M)=\big\{u\in M/\d M\, |\, \,a_{_{-\d}} u= u_{_{\,0}} a\ \  {\rm for\  all}\ a\in A\big\}$.

\vskip .2cm

A map $f\in \,$Hom$_{\cp} (A,M)$ is called a {\it derivation} from $A$ to $M$, if
\begin{equation*}
  f(a_\la b)=a_\la f(b) + f(a)_\la b
\end{equation*}
for all $a,b\in A$. Observe that
\begin{equation*}
  C^{\, 2}(A,M)=\big\{\, \varphi_\la: A\tt A\rightarrow M[\la]\, |\, \varphi_\la(\d a,b)=-\la \,\varphi_\la (a,b)\ {\rm and }\
  \varphi_\la( a,\d b)=(\la+\d)\, \varphi_\la (a,b) \big\}
\end{equation*}
and the differential $d_1: C^{\, 1}(A,M)\rightarrow C^{\, 2}(A,M)$ is given by
\begin{equation*}
  \big( d_1\, \varphi\big)_\la (a,b)= a_\la \varphi(b) - \varphi(a_\la b)+\varphi(a)_\la b.
\end{equation*}
It is clear that Ker$\ d_1=$\, Der$(A,M)$. And the maps $g_u:A\rightarrow M$ (for $u\in M$) defined by
\begin{equation*}
  g_u(a)=a_{_{-\d}} u- u_{_{\,0}} a
\end{equation*}
correspond to the inner derivations or the image of $d_0$. By definition, we have
\begin{equation*}
  \big(d_2 \,\varphi\big)_{\la,\mu}(a,b,c)= a_\la \varphi_\mu (b, c) - \varphi_{\la+\mu} (a_\la b, c)  +  \varphi_\la (a, b_\mu c)  -
  \varphi_\la (a, b)_{\la+\mu} \, c.
\end{equation*}

\

Finally, the Theorem \ref{th1}, Theorem \ref{th2} and Theorem \ref{th3} hold for associative conformal algebras.

\

\subsection*{Acknowledgements}
The  author was supported  by a grant by Conicet, Consejo Nacional
de Investigaciones Cient\'ificas y T\'ecnicas (Argentina).  Special thanks to my teacher Victor Kac.

\

\bibliographystyle{amsalpha}


\end{document}